\numberwithin{equation}{section}
\numberwithin{figure}{section}
\theoremstyle{plain}
\newtheorem{thm}{Theorem}[section]
\newtheorem{lem}[thm]{Lemma}
\newtheorem{cor}[thm]{Corollary}
\theoremstyle{remark}
\newcommand{\M}{\operatorname{M}}
\begin{document}

\title{Tilted Halved Hexagons: Hexagons, Semi-hexagons, and Halved Hexagons Under One Roof}

\author{Tri Lai}
\address{Department of Mathematics, University of Nebraska -- Lincoln, Lincoln, NE 68588, U.S.A.}
\email{tlai3@unl.edu}
\thanks{This research was supported in part  by Simons Foundation Collaboration Grant (\# 585923).}

\subjclass[2010]{05A15,  05B45}

\keywords{perfect matchings, plane partitions, lozenge tilings}

\date{\today}

\dedicatory{}

\begin{abstract}
We investigate a new family of regions that is the universal generalization of three well-known region families in the field of enumeration of tilings: the quasi-regular hexagons, the semi-hexagons, and the halved hexagons. We prove a simple product formula for the number of tilings of these new regions. Our main result also yields the enumerations of two special classes of plane partitions with restricted parts.
\end{abstract}

\maketitle
\section{Introduction}\label{Sec:Intro}

One of the most well-known regions in the field of  enumeration of tilings is the `\emph{quasi-regular hexagon}',  a centrally symmetric hexagon with all $120^{\circ}$ angles. MacMahon's theorem \cite{Mac} on plane partitions fitting in an $(a\times b\times c)$-box yields an elegant product formula for the number of lozenge tilings of a quasi-regular hexagon $H_{a,b,c}$ of side-lengths $a,b,c,a,b,c$ \cite{DT} (in counter-clockwise order, starting from the north side\footnote{From now on we always list the side-lengths of a region in this order.}): 
\begin{equation}\label{Maceq}
\M(H_{a,b,c})=\prod_{i=1}^{a}\prod_{j=1}^{b}\prod_{k=1}^{c}\frac{i+j+k-1}{i+j+k-2},
\end{equation}
where $\M(R)$ denotes the number of lozenge tilings in the region $R$. Here, a \emph{lozenge} (or \emph{unit rhombus}) is a union of any two unit equilateral triangles sharing an edge, and a \emph{lozenge tiling} of a region is a covering of the region by lozenges with no gaps or overlaps. See Figure \ref{Fig:3polar} (a) for an example of a quasi-regular hexagon and Figure \ref{Fig:3polar}(d) for a lozenge tiling.

\begin{figure}\centering
\includegraphics[width=12cm]{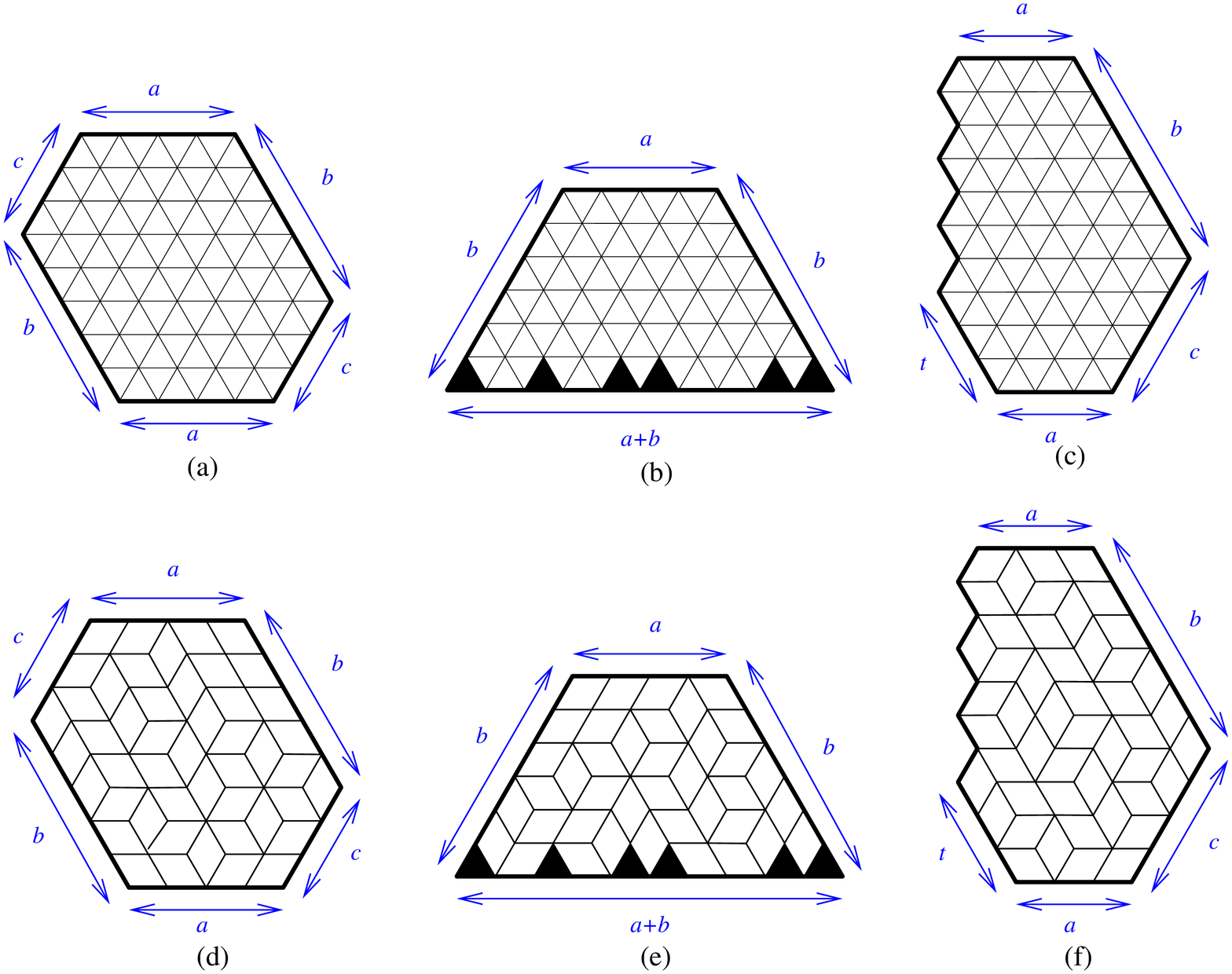}
\caption{(a) A quasi-regular hexagon, (b) a dented semi-hexagon, (c) a halved hexagon, and (d,e,f) their tilings.}\label{Fig:3polar}
\end{figure}

Cohn, Larsen, and Propp \cite[Proposition 2.1]{CLP} generalized MacMahon's theorem by giving a one-to-one correspondence between lozenge tilings of a \emph{semi-hexagon} with dents on the base and semi-strict Gelfand--Tsetlin patterns \cite{GT}. The (dented) semi-hexagon $S_{a,b}(s_1,s_2,\dots,s_b)$ is  a trapezoidal region of side-lengths $a,b,a+b,b$ with  $b$ removed up-pointing unit triangles at the positions $s_1,s_2,\dots,s_b$ on the base, as they appear from left to right. These removed unit triangles are usually called the `\emph{dents}' of the region. See Figure \ref{Fig:3polar} (b)  and (e) for an example of a semi-hexagon and  a lozenge tiling of its; the black triangles indicate the unit triangles removed.
The lozenge tilings of the semi-hexagon $S_{a,b}(s_1,s_2,\dots,s_b)$ are also in bijection with the column-strict plane partitions (or, reverse semi-standard Young tableaux) with shape $(s_b-b,s_{b-1}-b+1,\dots,s_2-2,s_1-1)$. In particular, we have the following tiling formula:
\begin{equation}\label{CLPeq}
\M(S_{a,b}(s_1,s_2,\dots,s_b))=\prod_{1\leq i <j \leq a+b}\frac{s_{j}-s_{i}}{j-i}.
\end{equation}

R. Proctor  \cite[Corollary 4.1]{Proc} enumerated a certain class of staircase plane partitions that, in turn, are in bijection with the lozenge tilings of a quasi-regular hexagon with a maximal staircase cut off $P_{a,b,c}$ ($c\leq b$). We often called this  region  a `\emph{halved hexagon}.' See Figure \ref{Fig:3polar} (c)  for a halved hexagon (it is easy to see that $t=b-c$ in the picture) and Figure \ref{Fig:3polar}(f) for  a lozenge tiling of the halved hexagon. We have
\begin{equation}\label{Proctoreq}
\M(P_{a,b,c})=\prod_{i=1}^{a}\left[\prod_{j=1}^{b-c+1}\frac{a+i+j-1}{i+j-1}\prod_{j=b-c+2}^{b-a+i}\frac{2a+i+j-1}{i+j-1}\right],
\end{equation}
 where empty products are taken to be 1. It is worth noticing that when $b=c$, Proctor's formula gives the enumeration of the transpose-complementary plane partitions, one of the ten symmetry classes of plane partitions \cite{Stanley2}.

The above three tiling enumerations are three of the most popular results in the field of enumeration tilings. They have inspired many other results by various authors. It motivates us to find a universal generalization of the three regions: the quasi-regular hexagons, the semi-hexagons, and the halved hexagons. In this paper, we provide such a generalization and prove an exact tiling enumeration for it.

We consider a quasi-regular hexagon with side-lengths $x,$ $(l-1)k+t,$ $l,$ $x,$ $(l-1)k+t,$ $l$, then cut off a maximal \emph{$k$-staircase} whose $l$ steps have width $k$. When $k=1$, we have exactly a halved hexagon. When $k=0$, there is nothing cut off, the region is still a quasi-regular hexagon. When $k\geq 2$, we have new regions similar to the halved hexagons; however, the cut is tilted. We call the new regions \emph{$k$-halved hexagons} or \emph{tilted halved hexagons}. See Figure \ref{Fig:Tiltinghex} for several examples of the $k$-halved hexagons.

\begin{figure}\centering
\includegraphics[width=10cm]{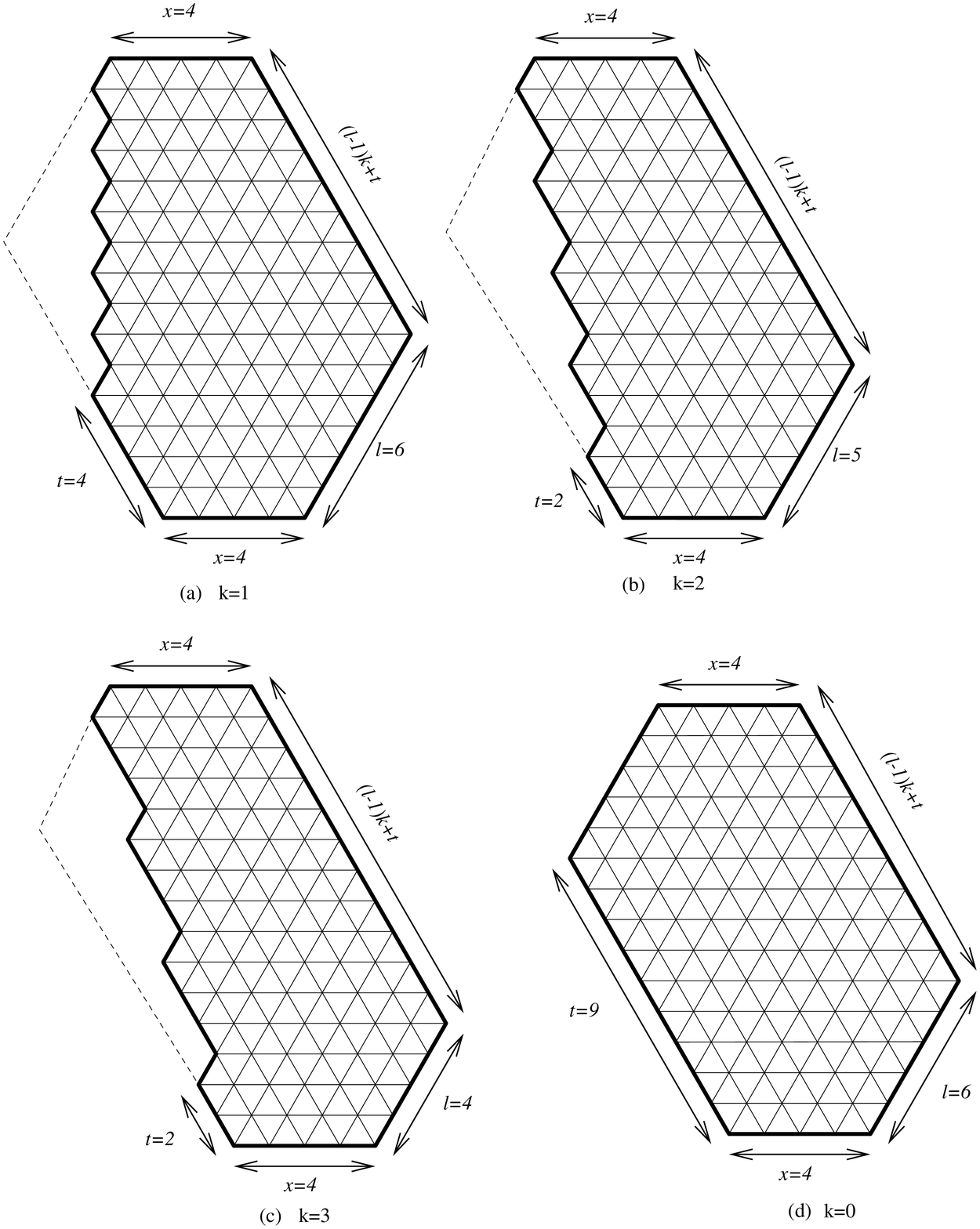}
\caption{The $k$-halved hexagons with no dents.}\label{Fig:Tiltinghex}
\end{figure}

We consider a more general situation when we allow some `dents' to appear on the staircase cut. We now start with a quasi-regular hexagon of side-lengths $x,$ $h(k+1)+(l-1)k+t,$ $l,$ $x,$ $h(k+1)+(l-1)k+t,$ $l$. We cut off a maximal $k$-staircase with $h+l$ steps. Label the staircase levels from the bottom to the top by $1,2,\dots,l+h$, for some non-negative integers $h,l$. We allow removing $h$ up-pointing unit triangles at certain corners of the staircase. Assume that the remaining steps have labels $a_1,a_2,\dots,a_l$ as they appear from bottom to the top. Denote by $H_{x,t,h}(a_1,a_2,\dots,a_l)$ the resulting region (see Figure \ref{Fig:Tiltinghalvehex2} for examples; the black triangles indicate the unit triangles removed). We still call this region a $k$-halved hexagon. When $k=t=x=0$, we get precisely the dented semi-hexagon in Cohn--Larsen--Propp's formula. It is worth noticing that Ciucu has investigated the dented halved hexagon (i.e., the case $k=1$) in \cite{Ciucu2}. However, as the author's knowledge, the cases when $k\geq 2$ have not been considered in the literature. The main theorem of this paper provides an exact tiling formula for the region $H_{x,t,h}(a_1,a_2,\dots,a_l)$.

\begin{figure}\centering
\setlength{\unitlength}{3947sp}%
\begingroup\makeatletter\ifx\SetFigFont\undefined%
\gdef\SetFigFont#1#2#3#4#5{%
  \reset@font\fontsize{#1}{#2pt}%
  \fontfamily{#3}\fontseries{#4}\fontshape{#5}%
  \selectfont}%
\fi\endgroup%
\resizebox{!}{18cm}{
\begin{picture}(0,0)%
\includegraphics{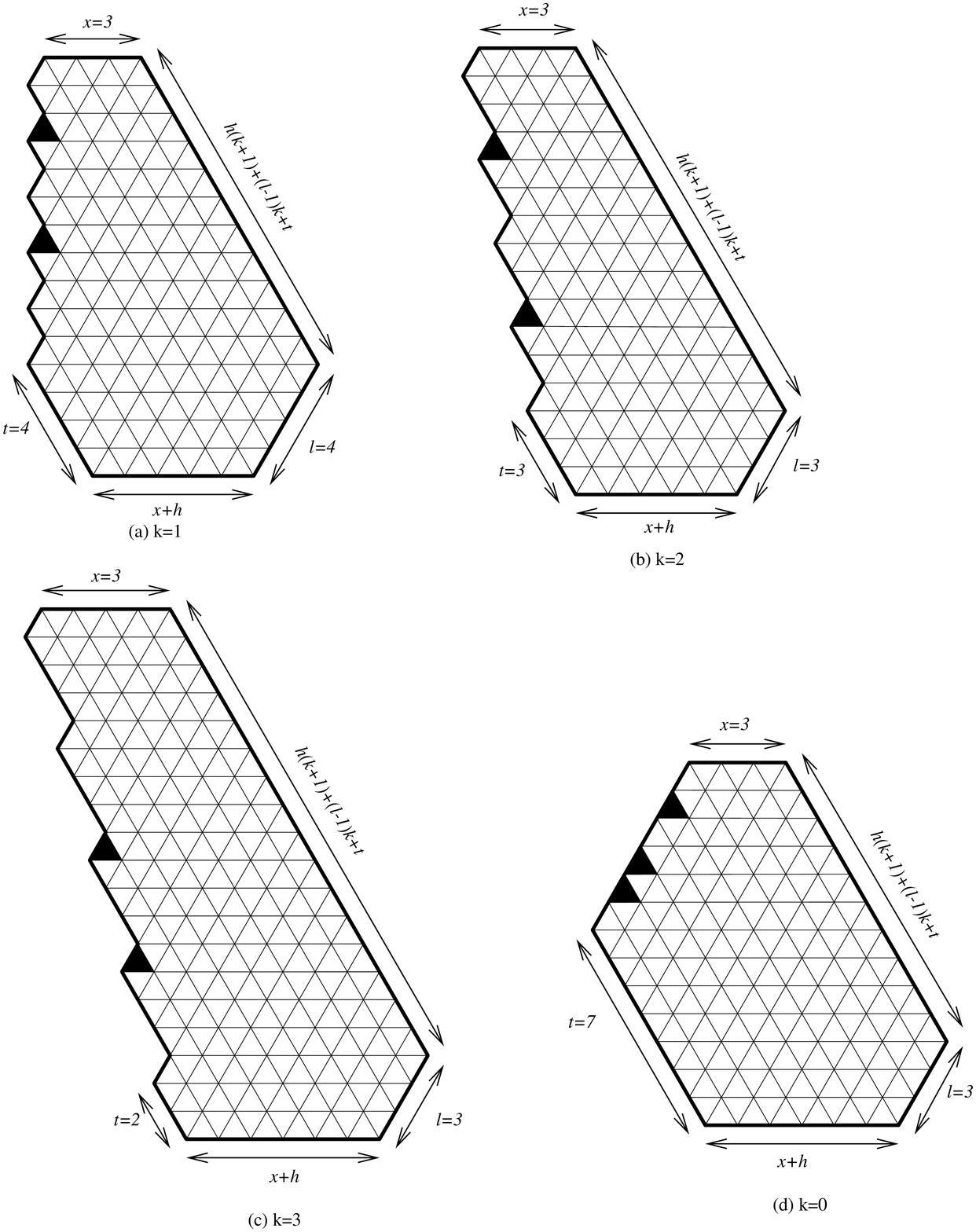}%
\end{picture}%

\begin{picture}(12527,15695)(1225,-15144)
\put(1284,-4179){\makebox(0,0)[lb]{\smash{{\SetFigFont{12}{14.4}{\rmdefault}{\mddefault}{\updefault}{\color[rgb]{0,0,0}$a_1$}%
}}}}
\put(1276,-3466){\makebox(0,0)[lb]{\smash{{\SetFigFont{12}{14.4}{\rmdefault}{\mddefault}{\updefault}{\color[rgb]{0,0,0}$a_2$}%
}}}}
\put(1284,-2041){\makebox(0,0)[lb]{\smash{{\SetFigFont{12}{14.4}{\rmdefault}{\mddefault}{\updefault}{\color[rgb]{0,0,0}$a_3$}%
}}}}
\put(1291,-631){\makebox(0,0)[lb]{\smash{{\SetFigFont{12}{14.4}{\rmdefault}{\mddefault}{\updefault}{\color[rgb]{0,0,0}$a_4$}%
}}}}
\put(7636,-4749){\makebox(0,0)[lb]{\smash{{\SetFigFont{12}{14.4}{\rmdefault}{\mddefault}{\updefault}{\color[rgb]{0,0,0}$a_1$}%
}}}}
\put(2875,-13293){\makebox(0,0)[lb]{\smash{{\SetFigFont{12}{14.4}{\rmdefault}{\mddefault}{\updefault}{\color[rgb]{0,0,0}$a_1$}%
}}}}
\put(8444,-11321){\makebox(0,0)[lb]{\smash{{\SetFigFont{12}{14.4}{\rmdefault}{\mddefault}{\updefault}{\color[rgb]{0,0,0}$a_1$}%
}}}}
\put(7216,-2627){\makebox(0,0)[lb]{\smash{{\SetFigFont{12}{14.4}{\rmdefault}{\mddefault}{\updefault}{\color[rgb]{0,0,0}$a_2$}%
}}}}
\put(9067,-10263){\makebox(0,0)[lb]{\smash{{\SetFigFont{12}{14.4}{\rmdefault}{\mddefault}{\updefault}{\color[rgb]{0,0,0}$a_2$}%
}}}}
\put(1653,-9056){\makebox(0,0)[lb]{\smash{{\SetFigFont{12}{14.4}{\rmdefault}{\mddefault}{\updefault}{\color[rgb]{0,0,0}$a_2$}%
}}}}
\put(9464,-9558){\makebox(0,0)[lb]{\smash{{\SetFigFont{12}{14.4}{\rmdefault}{\mddefault}{\updefault}{\color[rgb]{0,0,0}$a_3$}%
}}}}
\put(6804,-489){\makebox(0,0)[lb]{\smash{{\SetFigFont{12}{14.4}{\rmdefault}{\mddefault}{\updefault}{\color[rgb]{0,0,0}$a_3$}%
}}}}
\put(1240,-7601){\makebox(0,0)[lb]{\smash{{\SetFigFont{12}{14.4}{\rmdefault}{\mddefault}{\updefault}{\color[rgb]{0,0,0}$a_3$}%
}}}}
\end{picture}}
\caption{The $k$-halved hexagons with dents on the staircase cut.}\label{Fig:Tiltinghalvehex2}
\end{figure}

We use the Pochhammer symbol $(x)_n$ in our formula
\begin{equation}\label{poch}
(x)_n:=
\begin{cases}
x(x+1)(x+2)\dotsc(x+n-1) &\text{if $n>0$;}\\
1 &\text{if $n=0$;}\\
\frac{1}{(x-1)(x-2)(x-3)\dotsc(x+n)} &\text{if $n<0$.}
\end{cases}
\end{equation}

We also use the following function:
\begin{align}\label{phieq}
\Phi_{k}((a_i)_{i=1}^{l}; t,x,h)&=\prod_{i=1}^{l}\frac{(x+h+l+1-a_i)_{t+(k+1)a_i-l-k}}{(h+l+1-a_i)_{t+(k+1)a_i-l-k}}\notag\\
&\times \prod_{j=1}^{k-1}\prod_{i=1}^{\lfloor \frac{l+k-j}{k+1}\rfloor}\frac{((k+1)(x+h)+t+k(k+1)i+(j-1)k-(k+1)(k-1))_{j}}{((k+1)h+t+k(k+1)i+(j-1)k-(k+1)(k-1))_{j}}\notag\\
&\times \prod_{i=1}^{\lfloor \frac{l}{k+1}\rfloor}\frac{((k+1)(x+h)+t+k(k+1)i-k+1)_{(k+1)l+k-(k+1)^2i}}{((k+1)h+t+k(k+1)i-k+1)_{(k+1)l+k-(k+1)^2i}},
\end{align}
where empty products are taken to be 1. By definition, we have  $\Phi_{k}((a_i)_{i=1}^{l}; t,0,h)=1$.

We are now ready to state our main theorem:
\begin{thm}\label{mainthm} For non-negative integers $x,t,h,l$ and a sequence  $\textbf{a}=(a_i)_{i=1}^{l}$ of positive integers between $1$ and $h+l$ , we have 
\begin{align}\label{maineq}
\M(H_{x,t,h}(a_1,a_2,\dots,a_l))=\Phi_{k}((a_j)_{j=1}^{l}; t,x,h)\prod_{i=1}^{l}\Phi_{k}((a_j)_{j=1}^{i}; t,a_{i+1}-a_{i}-1,a_{i+1}-i-1),
\end{align}
where $a_{l+1}=l+h+1$ by convention.
\end{thm}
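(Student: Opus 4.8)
The plan is to prove Theorem~\ref{mainthm} by induction on the number $l$ of remaining steps of the $k$-staircase, using a lozenge-to-path translation to anchor the base cases and Kuo's graphical condensation to drive the induction. First I would recast a tiling of $H_{x,t,h}(a_1,\dots,a_l)$ as a family of non-intersecting lattice paths: the lozenges adjacent to the staircase cut are forced, so each surviving step $a_i$ emits one path crossing the hexagon while the $h$ dents correspond to the absent steps. The degenerate specializations then recover the three classical formulas and supply the base of the induction: $k=0$ gives the quasi-regular hexagon of \eqref{Maceq}, $k=1$ the dented halved hexagon of \eqref{Proctoreq}, $k=t=x=0$ the dented semi-hexagon of \eqref{CLPeq}, and $x=0$ makes the prefactor $\Phi_k(\mathbf a;t,0,h)=1$.

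The engine is the observation that the right-hand side of \eqref{maineq} telescopes. Writing $R(x,t,h;a_1,\dots,a_l)$ for that expression and using $\Phi_k(\mathbf a;t,0,h)=1$ together with the convention $a_{l+1}=l+h+1$, the factors $\Phi_k((a_j)_{j=1}^{i};\dots)$ with $i\le l-1$ are exactly the product defining $R(0,t,a_l-l;a_1,\dots,a_{l-1})$, so
\[
R(x,t,h;a_1,\dots,a_l)=\Phi_k(\mathbf a;t,x,h)\,\Phi_k(\mathbf a;t,l+h-a_l,h)\,R(0,t,a_l-l;a_1,\dots,a_{l-1}).
\]
Hence it suffices to establish the matching identity at the level of regions,
\[
\M(H_{x,t,h}(a_1,\dots,a_l))=\Phi_k(\mathbf a;t,x,h)\,\Phi_k(\mathbf a;t,l+h-a_l,h)\,\M(H_{0,t,a_l-l}(a_1,\dots,a_{l-1})),
\]
which simultaneously peels off the top step (dropping $l\mapsto l-1$ and resetting $h\mapsto a_l-l$) and deletes the $x$-strip.

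I would split this into two stages. Stage one is an $x$-factorization $\M(H_{x,t,h}(\mathbf a))=\Phi_k(\mathbf a;t,x,h)\,\M(H_{0,t,h}(\mathbf a))$, proved by induction on $x$ via a condensation whose four distinguished unit triangles lie on the $x$-strip and whose five auxiliary regions are again $k$-halved hexagons with the same $a_i$ but smaller parameters; this reduces the step to a relation purely among values of the formula. Stage two is the top-step removal at $x=0$, proved by a condensation with marked triangles at the topmost step, so that the auxiliary regions are $k$-halved hexagons with $l-1$ steps, matching the displayed recurrence. Combining the two stages with the induction hypothesis and the telescoping identity yields \eqref{maineq}.

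The main obstacle is the closing algebraic check that $\Phi_k$ in \eqref{phieq} satisfies these recurrences. Its first line transforms transparently, but the two floored double products are delicate: the bounds $\lfloor(l+k-j)/(k+1)\rfloor$ and $\lfloor l/(k+1)\rfloor$ do not shift uniformly when $l$ decreases by one, so a factor is created or destroyed only for particular residues of $l$ modulo $k+1$. I would therefore organize the Pochhammer bookkeeping by the residue of $l\bmod(k+1)$, matching each floor jump against the explicit factor $\Phi_k(\mathbf a;t,l+h-a_l,h)$ produced by the removed top step, and I expect this case analysis, rather than the combinatorics of the condensation itself, to be the technical heart of the argument.
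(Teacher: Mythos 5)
Your telescoping identity for the right-hand side of \eqref{maineq} is correct, and both of your ``stage'' identities are true statements (they follow from the theorem itself). The gaps are in the proofs you propose for those stages. The central problem is that neither condensation is actually constructed, and the properties you ascribe to them are not achievable. Kuo condensation produces usable minors only when the four removed unit triangles sit where their removal triggers a cascade of forced lozenges; for this region that means the corners away from the staircase. Triangles removed from the interior of the $x$-strip create no forced lozenges at all, so the five minors in your stage-one condensation would not be $k$-halved hexagons with the same $a_i$ --- they would be regions with boundary dents outside the $H$-family, about which your induction says nothing. When the four points are placed where forcing does cascade (the paper puts $u,v$ at the upper-right corner and $w,s$ at the lower-right corner), the trims inevitably hit the top step of the staircase and the base, so the minors are $H_{x,t,h}(a_1,\dots,a_{l-1})$, $H_{x,t-1,h}(a_1,\dots,a_l)$, $H_{x,t-1,h}(a_1,\dots,a_{l-1})$, $H_{x+1,t-1,h}(a_1,\dots,a_{l-1})$, and $H_{x-1,t,h}(a_1,\dots,a_l)$: the parameters $l$, $t$, $x$ all move at once, and one minor even has \emph{larger} $x$. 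A condensation that changes only $x$ (or only removes the top step) while fixing $\textbf{a}$ in all five minors is exactly what the geometry does not permit, and your plan stands or falls with it.

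Two further structural gaps. First, even if you adopt the natural condensation above, your nested induction (outer on $l$, inner on $x$) cannot close: the minor $H_{x,t-1,h}(a_1,\dots,a_l)$ has the same $l$, the same $x$, and the same $\textbf{a}$, with only $t$ smaller, so it is covered by neither inductive hypothesis. This is precisely why the paper inducts on the composite statistic $p=x+t+l+(k+2)h$ rather than on $l$ or $x$ separately. Second, your base cases are misidentified: for a fixed $k\geq 2$, MacMahon's formula ($k=0$), Proctor's formula ($k=1$), and Cohn--Larsen--Propp's formula ($k=t=x=0$) are specializations in $k$, not in $l$, so they cannot anchor an induction on $l$; what is actually needed is the case $l=0$ (where the region has a unique tiling), together with the forced-lozenge normalizations $a_1=1$ and $a_l=h+l$ and the degenerate cases $x=0$, $t=0$, $h=0$, all of which must be dispatched before any condensation applies and none of which appear in your plan. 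A final remark rather than a gap: the floor-function bookkeeping you expect to be the technical heart is largely an artifact of your stage-two split across different $l$; in the paper's arrangement the four $\Phi$-ratios pair up so that almost everything cancels and the verification collapses to a two-term identity of linear factors.
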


\begin{figure}\centering
\includegraphics[width=8cm]{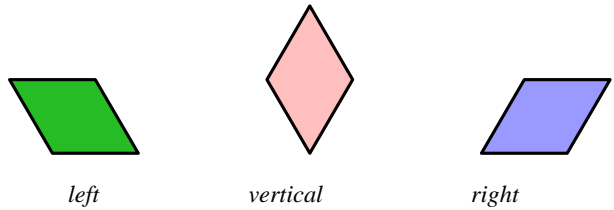}
\caption{Three orientations of the lozenges.}\label{Fig:orientation}
\end{figure}

\begin{figure}\centering
\setlength{\unitlength}{3947sp}%
\begingroup\makeatletter\ifx\SetFigFont\undefined%
\gdef\SetFigFont#1#2#3#4#5{%
  \reset@font\fontsize{#1}{#2pt}%
  \fontfamily{#3}\fontseries{#4}\fontshape{#5}%
  \selectfont}%
\fi\endgroup%
\resizebox{!}{9cm}{
\begin{picture}(0,0)%
\includegraphics{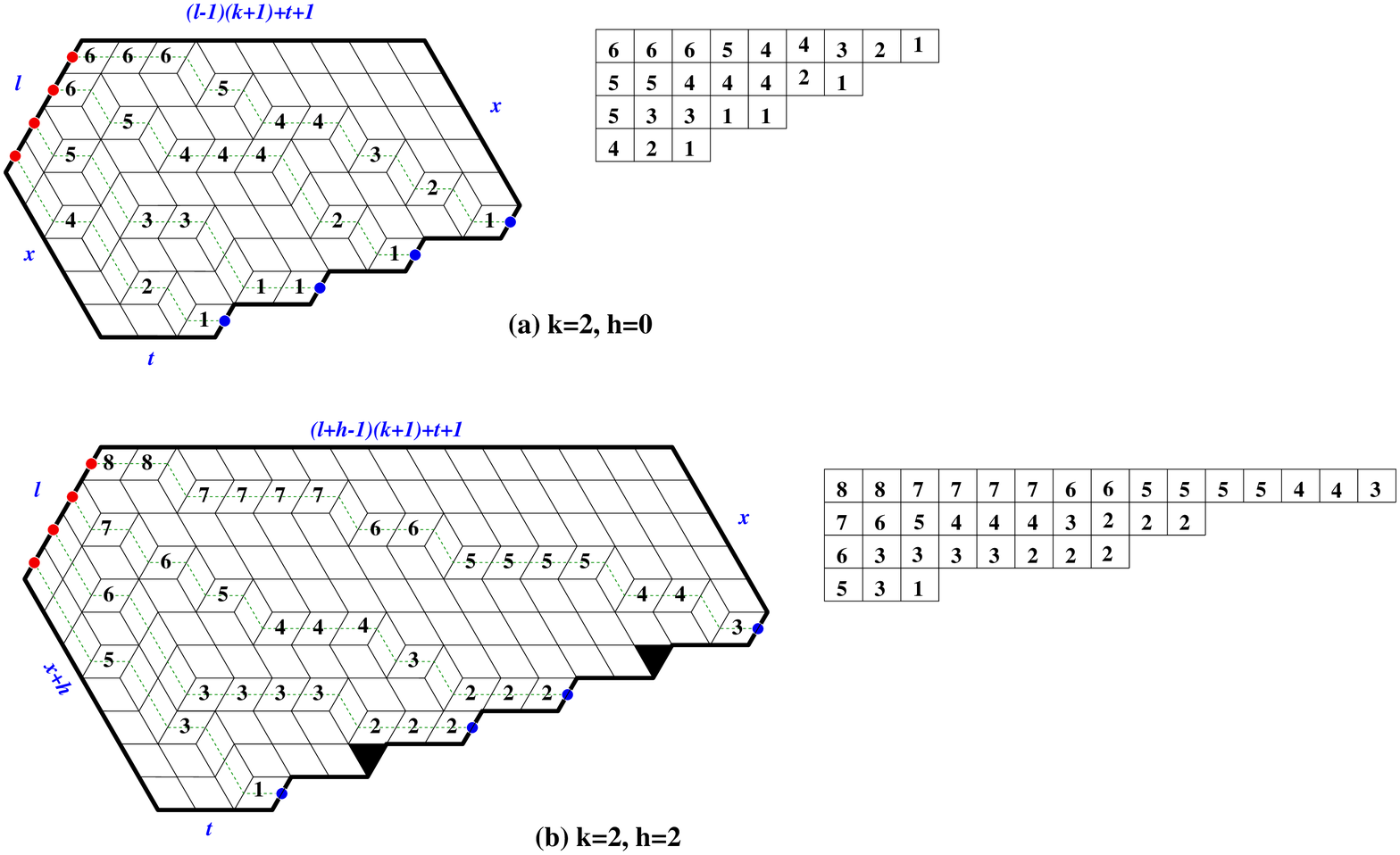}%
\end{picture}%

\begin{picture}(14990,9188)(571,-14414)
\put(3172,-8787){\makebox(0,0)[lb]{\smash{{\SetFigFont{14}{16.8}{\rmdefault}{\bfdefault}{\updefault}{\color[rgb]{0,0,0}$a_1=1$}%
}}}}
\put(4297,-8374){\makebox(0,0)[lb]{\smash{{\SetFigFont{14}{16.8}{\rmdefault}{\bfdefault}{\updefault}{\color[rgb]{0,0,0}$a_2=2$}%
}}}}
\put(5218,-8137){\makebox(0,0)[lb]{\smash{{\SetFigFont{14}{16.8}{\rmdefault}{\bfdefault}{\updefault}{\color[rgb]{0,0,0}$a_3=3$}%
}}}}
\put(6343,-7665){\makebox(0,0)[lb]{\smash{{\SetFigFont{14}{16.8}{\rmdefault}{\bfdefault}{\updefault}{\color[rgb]{0,0,0}$a_4=4$}%
}}}}
\put(3683,-13807){\makebox(0,0)[lb]{\smash{{\SetFigFont{14}{16.8}{\rmdefault}{\bfdefault}{\updefault}{\color[rgb]{0,0,0}$a_1=1$}%
}}}}
\put(5831,-13157){\makebox(0,0)[lb]{\smash{{\SetFigFont{14}{16.8}{\rmdefault}{\bfdefault}{\updefault}{\color[rgb]{0,0,0}$a_2=3$}%
}}}}
\put(6854,-12862){\makebox(0,0)[lb]{\smash{{\SetFigFont{14}{16.8}{\rmdefault}{\bfdefault}{\updefault}{\color[rgb]{0,0,0}$a_3=4$}%
}}}}
\put(8900,-12094){\makebox(0,0)[lb]{\smash{{\SetFigFont{14}{16.8}{\rmdefault}{\bfdefault}{\updefault}{\color[rgb]{0,0,0}$a_4=6$}%
}}}}
\end{picture}}
\caption{The bijection between tilings and partitions in the proof of Corollary \ref{cor1}.}\label{Fig:Tiltinghalfhex5}
\end{figure}

\begin{figure}\centering
\includegraphics[width=10cm]{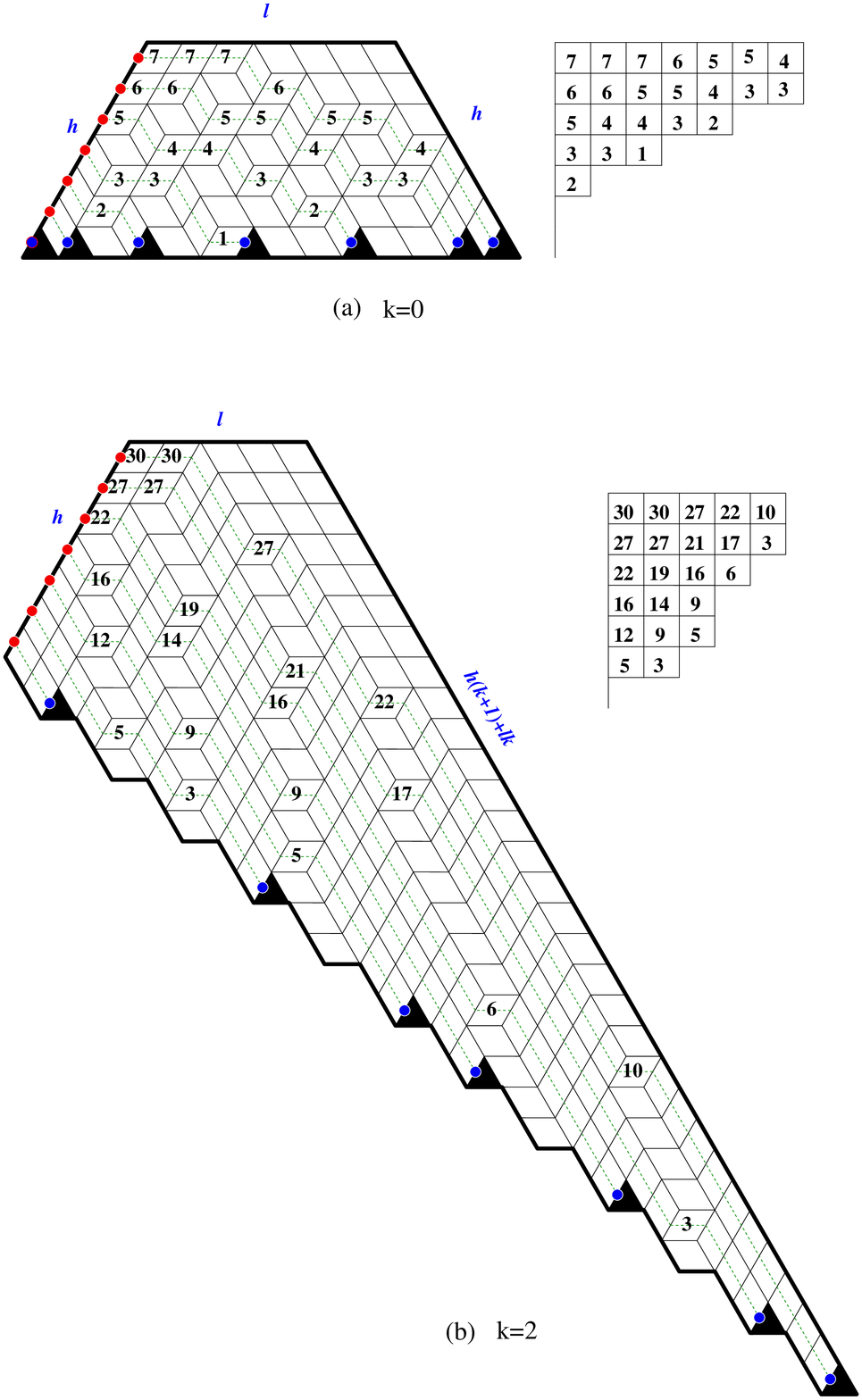}
\caption{The bijection between tilings and partitions in the proof of Corollary \ref{cor2}. The upper plane partition has two rows of length $0$, while the lower partition has one row of length $0$. }\label{Fig:Tiltinghalfhex4}
\end{figure}

In the sake of clarity, we classify the lozenges into three types based on their orientations, as in Figure \ref{Fig:orientation}. In particular, each lozenge tiling may have one of three orientations: \emph{left}, \emph{right}, and \emph{vertical}.

Similarly to the case of MacMahon's, Cohn--Larsen--Propp's, and Proctor's formulas above, our main theorem also implies new enumerative results of certain classes of plane partitions.

\begin{cor}\label{cor1}Assume that $x,t,h,l$ are non-negative integers and  $\textbf{a}=(a_i)_{i=1}^{l}$ is a sequence of positive integers between $1$ and $h+l$. 
Then the number of plane partitions of shape $(t+(k+1)a_l-k-l, t+(k+1)a_{l-1}-k-(l-1),\dots,t+(k+1)a_2-k-2,t+(k+1)a_1-k-1)$, whose parts in the $i$-th row are at least $a_{l-i+1}-l+i$ and at most $x+h$, is equal to
\[\Phi_{k}((a_j)_{j=1}^{l}; t,x,h)\prod_{i=1}^{l}\Phi_{k}((a_j)_{j=1}^{i}; t,a_{i+1}-a_{i}-1,a_{i+1}-i-1).\]
\end{cor}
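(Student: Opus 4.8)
The plan is to derive the corollary directly from Theorem \ref{mainthm}. The product displayed in the corollary is verbatim the right-hand side of \eqref{maineq}, so by the main theorem it already equals $\M(H_{x,t,h}(a_1,a_2,\dots,a_l))$. Consequently, the entire content of the corollary is a bijection: it suffices to match lozenge tilings of $H_{x,t,h}(a_1,\dots,a_l)$ with plane partitions of shape $(t+(k+1)a_l-k-l,\dots,t+(k+1)a_1-k-1)$ whose $i$-th row entries lie between $a_{l-i+1}-l+i$ and $x+h$, and then invoke Theorem \ref{mainthm} for the count. This is the tilted-staircase analogue of the Cohn--Larsen--Propp correspondence recalled around \eqref{CLPeq}, so I would model the argument on it.

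First I would set up the standard encoding of a tiling by non-intersecting lattice paths (equivalently by its height function), using two of the three lozenge orientations of Figure \ref{Fig:orientation} to build the paths and the third as background. Orienting the region as in Figure \ref{Fig:Tiltinghalfhex5}, I would associate to each remaining step, at level $a_i$, one lattice path running from the staircase cut to the opposite boundary. The non-crossing condition on these $l$ paths is exactly the interlacing (Gelfand--Tsetlin) condition, which translates into the weak monotonicity along rows and columns required of a plane partition. Reading, for the $i$-th step, the horizontal displacements of the relevant lozenges along the lattice line through that step yields the entries of one row, and I would check that the collection of these rows is precisely a plane partition of the asserted shape.

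The substance is then the verification of shape and bounds. I would count, on the lattice line through the step at level $a_{l-i+1}$, the number of admissible lozenge positions between the tilted staircase and the far boundary, confirming it equals $t+(k+1)a_{l-i+1}-k-(l-i+1)$; the width-$k$ steps are exactly what turns the coefficient of $a_{l-i+1}$ into $k+1$. The floor $a_{l-i+1}-l+i$ should emerge as the lowest position the path can occupy given the step immediately below it, while the ceiling $x+h$ is the total horizontal extent available before the west boundary. I expect the main obstacle to be the bookkeeping for the tilted case $k\ge 2$: unlike the halved hexagon $k=1$, each step now spans horizontal width $k$, forcing a band of lozenges adjacent to the staircase, and I must track these forced lozenges carefully so that the remaining free positions reproduce exactly the stated row lengths and the two part bounds. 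Once shape, bounds, and interlacing are matched, the map is a bijection and the enumeration is immediate from Theorem \ref{mainthm}.
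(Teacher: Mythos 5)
Your proposal matches the paper's own proof: the paper likewise reduces the corollary to a bijection between tilings of $H_{x,t,h}(a_1,\dots,a_l)$ (after reflecting the region as in Figure \ref{Fig:Tiltinghalfhex5}) and the stated plane partitions, encoding each tiling as $l$ disjoint paths of right and vertical lozenges with left lozenges as background, reading off row entries from the right lozenges along each path, and then invoking Theorem \ref{mainthm}. The details you flag as needing verification (row lengths $t+(k+1)a_i-k-i$, the bounds $a_{l-i+1}-l+i$ and $x+h$, reversibility) are exactly the ones the paper checks, so your plan is the same argument.
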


\begin{proof}
We reflect the region $H_{x,t,h}(a_1,a_2,\dots,a_l)$ over the bisector of its $120^{\circ}$ lower-left angle and get a region like the ones on the left-hand side of Figure \ref{Fig:Tiltinghalfhex5}. See the upper-left picture is for the case $k=2, h=0$, and the lower-left picture for the case $k=2, h=2$. Each tiling of the resulting region can be encoded uniquely as a family of $l$ disjoint lozenge paths consisting of juxtaposing right and vertical lozenges (indicated by the ones with a dotted line in the middle). All lozenges outside these paths are left lozenges. The $i$-th path (ordered from bottom to top) starts at the position $a_i$ on the zigzag side and ends at the position $i$ on the northwest side of the region. We label a right lozenge in the $i$-th path by $i+x$, where $x$ is the number of the vertical lozenges before it as we travel along the path from right to left. For example, the first lozenge path on the upper-left picture of Figure \ref{Fig:Tiltinghalfhex5} has three right lozenges labeled by $1,2,4$, from right to left; the first lozenge path on the lower-left picture has three right lozenges labeled by $1,3,5$. It is easy to see that the $i$-th lozenge path has exactly $t+(k+1)a_i-k-i$ labeled lozenges.

Consider a partition $\lambda$ of shape $(t+(k+1)a_l-k-l, t+(k+1)a_{l-1}-k-(l-1),\dots,t+(k+1)a_2-k-2,t+(k+1)a_1-k-1)$ whose parts in the $j$-th row record the labels of along the $(l-j+1)$-th lozenge path, for $j=1,2,\dots,l$ (see the right pictures in Figure \ref{Fig:Tiltinghalfhex5}). It is easy to see that the resulting plane partition satisfies all the assumptions of the corollary. One could also verify that this mapping is reversible, so is a bijection. The corollary now follows from Theorem \ref{mainthm}.
\end{proof}

Next, we consider the a different class of plane partitions. Our plane partitions  are of shape $\lambda=(\lambda_1,\lambda_2,\dots,\lambda_h)$ (where $\lambda_1\geq \lambda_2\geq \cdots \lambda_n \geq 0$) and satisfy the following three properties:
\begin{enumerate}
\item  the parts are at most $h+k(h+l)$;
\item the $j$-th part from the right in each row is at least $kj$;
\item the different between the $j$-th and $(j+1)$-th parts in each row (if both are defined) is at least $(\lambda_{i}-\lambda_{i+1})k$.
\end{enumerate}

\begin{cor}\label{cor2}
The number of plane partitions of shape $\lambda=(\lambda_1,\lambda_2,\dots,\lambda_h)$ satisfying the properties (1), (2), (3) above
is  equal to 
\[\prod_{i=1}^{l}\Phi_{k}((a_j)_{j=1}^{i}; 0,a_{i+1}-a_{i}-1,a_{i+1}-i-1),\]
where  $\{a_1<a_2<\dots<a_{l}\}$ is the complement of  the set $\{\lambda_{i}+i\}_{i=1}^{h}$, i.e.
\[\{a_1,a_2,\dots,a_l\}=\{1,2,\dots, h+l\}-\{\lambda_{i}+i\}_{i=1}^{h},\]
and where $a_{l+1}=h+l+1$ by convention.
\end{cor}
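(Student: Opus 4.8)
The plan is to deduce this from the special case $x=0,\,t=0$ of Theorem \ref{mainthm}, together with a bijection analogous to — but dual to — the one used for Corollary \ref{cor1}. First I would observe that the claimed product is nothing but $\M(H_{0,0,h}(a_1,\dots,a_l))$. Indeed, setting $x=0$ and $t=0$ in \eqref{maineq} and invoking the normalization $\Phi_{k}((a_j)_{j=1}^{l};t,0,h)=1$ recorded just after \eqref{phieq}, the leading factor $\Phi_{k}((a_j)_{j=1}^{l};0,0,h)$ collapses to $1$, so \eqref{maineq} reduces to $\prod_{i=1}^{l}\Phi_{k}((a_j)_{j=1}^{i};0,a_{i+1}-a_i-1,a_{i+1}-i-1)$, exactly the right-hand side of the corollary. (Note that the product factors do not collapse, since their third argument is $a_{i+1}-i-1$, not $0$.) Thus it suffices to exhibit a bijection between the lozenge tilings of $H_{0,0,h}(a_1,\dots,a_l)$ and the plane partitions of shape $\lambda$ obeying (1)--(3).

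For the bijection I would reflect the region as in the proof of Corollary \ref{cor1}, but now encode each tiling by the \emph{complementary} family of lozenge paths — the $h$ disjoint paths built from left and vertical lozenges, one issuing from each of the $h$ removed up-pointing triangles (the dents). The dents sit at the $h$ staircase levels not among $a_1,\dots,a_l$; since $\{a_1<\cdots<a_l\}$ is by hypothesis the complement of $\{\lambda_i+i\}_{i=1}^{h}$ in $\{1,\dots,h+l\}$, the dents occupy exactly the levels $\lambda_i+i$, and the $h$ paths are therefore naturally indexed by the rows of $\lambda$. Reading the labels of the path lozenges (as in Corollary \ref{cor1}) produces an array of $h$ rows, and I would show that its $i$-th row has length $\lambda_i$, with empty rows corresponding to dents at the lowest available levels, as illustrated in Figure \ref{Fig:Tiltinghalfhex4}. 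This identifies the array as a plane partition of shape $\lambda$ and, run in reverse, recovers the tiling.

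The substance of the argument is to check that the image of the tilings under this encoding is exactly the set of plane partitions satisfying (1)--(3). I expect property (3) to be the main obstacle: unlike the semistandard/column-strict conditions that govern the untilted ($k=1$) regions, the $k$-tilted staircase forces neighbouring paths to maintain a separation proportional to $k$, which is precisely the $k$-dependent gap condition (3); translating the step width $k$ and the non-crossing requirement of the $h$ paths into this inequality is the delicate point, and along the way one must verify that the levels $\lambda_i+i$ are genuinely distinct, so that the complement $\{a_i\}$ is well defined. Conditions (1) and (2) should be comparatively routine: (1) is the bound coming from the finite horizontal extent $h+k(h+l)$ of the reflected region, and (2) is the left-hand boundary constraint that the staircase steps impose on each path. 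Once (1)--(3) are matched with the admissibility of the path families and the bijection is verified, the corollary follows at once from the count $\M(H_{0,0,h}(a_1,\dots,a_l))$ obtained in the first step.
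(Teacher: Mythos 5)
Your proposal follows essentially the same route as the paper's own proof: reduce the right-hand side to $\M(H_{0,0,h}(a_1,\dots,a_l))$ via Theorem \ref{mainthm} with $x=t=0$ (the leading $\Phi$-factor collapsing to $1$ by the normalization), then encode each tiling by the $h$ disjoint lozenge paths issuing from the dents, whose labels form the rows of the plane partition; the paper differs only cosmetically, applying a vertical reflection plus a $120^{\circ}$ rotation so that its paths consist of right and vertical lozenges rather than your left and vertical ones. One small slip in your parenthetical: the product factors fail to collapse because their $x$-slot (the second numeric argument) equals $a_{i+1}-a_i-1\neq 0$, not because of the third argument $a_{i+1}-i-1$, but this does not affect the argument.
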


\begin{proof}
 We reflect the region $H_{0,0,h}(a_1,a_2,\dots,a_l)$ over a vertical line and rotate the resulting region $120^{\circ}$ clockwise. This way, we get a region like the ones  on the left-hand side of Figure \ref{Fig:Tiltinghalfhex4} (see the top picture for the case of $k=0$, and the bottom picture for the case $k=2$). There are $l+h$ horizontal steps on the staircase base of  the region; the positions of the steps with no dent are $a_1,a_2,\dots,a_l$ (ordered from left to right). Assume that  $\{b_1,b_2,\dots,b_h\}$ is the complement of $\{a_1,a_2,\dots,a_l\}$. Its means that $b_1,\dots,b_h$ correspond to the positions of the $h$ dents. We now can encode each tiling of the region by a family of $h$ disjoint lozenge paths consisting of juxtaposing right and vertical lozenges as in the previous corollary. The  paths start at the positions of the dents on the base and end at the northwest side of the region. More precisely, the $j$-th path starts from the $b_j$-dent and contains  exactly $b_j-j$ right lozenges, for $j=1,2,\dots,h$. Each right lozenge in the path is labeled by the number of vertical lozenges before it, as we travel along the path from right to left. For example, in the lower-left picture of Figure \ref{Fig:Tiltinghalfhex4},  the first lozenge path has no right lozenge (as $b_1-1=0$), and the second path has $2$ right lozenges (as $b_2-2=2$) labeled by $3,5$, from right to left. 

Consider the plane partition of shape $\lambda=(b_h-h,b_{h-1}-(h-1),\dots,b_1-1)$, whose $j$-th row records the labels along the $(h-j+1)$-th lozenge path, for $j=1,2,\dots,h$. It is easy to see that the resulting plane partition satisfies properties (1)--(3). This way, we have just created a mapping from the lozenge tilings of the region $H_{0,0,h}(a_1,a_2,\dots,a_l)$ to these plane partitions. It is easy to see that this mapping is a bijection, and the corollary follows from Theorem \ref{mainthm} again.
\end{proof}

It is worth noticing that,  when $k=0$, the proof Corollary \ref{cor2} provides a bijection between lozenge tilings of a semi-hexagon  in Cohn--Larsen--Propp's formula and column-strict plane partitions.




 

\section{Proof of the main theorem}\label{Sec:Proof}

A \emph{forced lozenge} of a tile-able region $R$ is the lozenge that is contained in any tilings. The removal of forced lozenges does not change the number of tilings. A \emph{perfect matching} of a graph is a collection of disjoint edges that cover all vertices of the graph. There is a natural bijection between tilings of a region on the triangular lattice and perfect matchings of its \emph{(planar) dual graph} (the graph whose vertices are the unit triangles in the region and whose edges connect precisely two unit triangles sharing an edge). We use the notation $\M(G)$ for the number of perfect matchings of the graph $G$.

E. H. Kuo \cite{Kuo} proved the following \emph{graphical condensation}, often mentioned as `\emph{Kuo condensation}.' This is the key to our proof.

\begin{lem}[Theorem 2.1 in \cite{Kuo}]\label{Kuolem}
Let $G=(V_1,V_2,E)$ be a plane bipartite graphs in which $|V_1|=|V_2|$. Let vertices $u,$ $v,$ $w,$ and $s$ appear in a cyclic order on a face of $G$. If $u,w \in V_1$ and $v,s\in V_2$, then 
\begin{align}\label{Kuoeq}
\M(G)\M(G-\{u,v,w,s\})=\M(G-\{u,v\})\M(G-\{w,s\})+\M(G-\{u,s\})\M(G-\{v,w\}).
\end{align}
\end{lem}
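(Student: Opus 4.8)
The plan is to prove the identity bijectively, by reading each product of two matching numbers as a count of \emph{pairs} of perfect matchings and then superimposing the two matchings in a pair. Interpret $\M(G)\M(G-\{u,v,w,s\})$ as the number of pairs $(M_1,M_2)$ in which $M_1$ is a perfect matching of $G$ and $M_2$ is a perfect matching of $G-\{u,v,w,s\}$, and likewise read the two terms on the right as counting pairs of matchings of the indicated vertex-deleted graphs. For any such pair I would form the superposition $M_1\cup M_2$, i.e.\ the symmetric difference $M_1\triangle M_2$ together with the doubled edges. Since $G$ is bipartite, this superposition decomposes into doubled edges, alternating even cycles, and alternating paths whose endpoints are exactly the vertices covered by one matching but not the other.

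For the pairs counted on the left, the only vertices missed by $M_2$ are $u,v,w,s$, so $M_1\triangle M_2$ contains exactly two alternating paths whose four endpoints are $u,v,w,s$. Each such path begins and ends with an $M_1$-edge, hence has an odd number of edges and therefore joins a vertex of $V_1$ to a vertex of $V_2$. As $u,w\in V_1$ and $v,s\in V_2$, the only admissible pairings of the endpoints are $\{uv,\,ws\}$ (pair $u$ with $v$, and $w$ with $s$) and $\{us,\,vw\}$; the pairing $\{uw,\,vs\}$ is excluded because it would join two vertices of the same part. I would call these two possibilities \emph{Type A} and \emph{Type B}, so the left-hand pairs split into two disjoint classes.

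The crux is to match each class with one right-hand term by a bijection. For a Type A pair I keep the doubled edges and cycles with their original matchings and reassign the two paths so that the $u$–$v$ path exposes $u$ and $v$ while the $w$–$s$ path exposes $w$ and $s$; this yields a pair $(N_1,N_2)$ with $N_1\in\M(G-\{u,v\})$ and $N_2\in\M(G-\{w,s\})$, and one checks that $N_1\triangle N_2=M_1\triangle M_2$, which allows the construction to be inverted by reading off the two paths from the exposed vertices. The point that makes this a bijection \emph{onto} the full right-hand set is planarity: because $u,v,w,s$ lie in this cyclic order on a single face, the two vertex-disjoint paths in any superposition $N_1\cup N_2$ of matchings of $G-\{u,v\}$ and $G-\{w,s\}$ cannot cross, which forces the pairing $\{uv,\,ws\}$ and rules out the crossing pairing $\{uw,\,vs\}$ (the only alternative left by the parity count in that case). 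The same argument, with the roles of $v$ and $s$ interchanged, matches Type B pairs bijectively with the pairs counted by $\M(G-\{u,s\})\M(G-\{v,w\})$. Adding the counts of the two classes then gives \eqref{Kuoeq}.

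I expect the main obstacle to be exactly this planarity step. Bipartiteness alone eliminates one of the three conceivable pairings, but it is the cyclic order of $u,v,w,s$ on a common face, combined with the fact that disjoint paths in a plane graph cannot cross, that determines which admissible pairing each superposition must realize and so closes the bijection in both directions; the bookkeeping of the cycles and doubled edges also has to be handled carefully to guarantee invertibility. An alternative route would express each $\M(G-S)$ as a minor of a Kasteleyn matrix and deduce \eqref{Kuoeq} from a Desnanot--Jacobi (Plücker) relation, but checking that the Kasteleyn signs are compatible with the prescribed cyclic order is itself the analogue of the planarity obstacle.
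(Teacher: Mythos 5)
The paper does not prove this lemma at all: it is imported verbatim as Theorem 2.1 of \cite{Kuo}, so there is no in-paper argument to compare against. Your proposal is, in substance, Kuo's own original proof of the graphical condensation identity, and it is correct. The two pillars are exactly the ones you isolate: (i) the parity count --- in the superposition $M_1\cup M_2$ with $M_1\in\M(G)$, $M_2\in\M(G-\{u,v,w,s\})$, the four exposed vertices are the endpoints of two alternating paths, each beginning and ending with an $M_1$-edge, hence of odd length, hence joining $V_1$ to $V_2$, which kills the pairing $\{uw,vs\}$ on the left-hand side; and (ii) planarity --- for the converse direction parity alone leaves \emph{two} candidate pairings (e.g.\ $\{uv,ws\}$ and the even-path pairing $\{uw,vs\}$ for superpositions from $\M(G-\{u,v\})\times\M(G-\{w,s\})$), and it is the cyclic order of $u,v,w,s$ on a common face, together with the fact that distinct components of the superposition are vertex-disjoint (every vertex has degree at most $2$ in $M_1\cup M_2$) and a Jordan-curve argument inside the face, that excludes the interleaved pairing and makes the path-swap surjective. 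Your reassignment of the paths is the same operation as the usual swap $N_1=M_1\triangle P$, $N_2=M_2\triangle P$ along the $u$--$v$ (resp.\ $u$--$s$) path, which preserves cycles and doubled edges and is inverted by reading the paths off $N_1\triangle N_2$; this is precisely how \cite{Kuo} argues. Your closing remark is also accurate: the Kasteleyn-matrix route via the Desnanot--Jacobi identity is a known alternative, and the sign bookkeeping there plays the role your planarity step plays here. The only polish I would ask for in a written-up version is to state explicitly the vertex-disjointness of the two paths before invoking the non-crossing argument, since the Jordan-curve contradiction needs it.
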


\begin{figure}\centering
\setlength{\unitlength}{3947sp}%
\begingroup\makeatletter\ifx\SetFigFont\undefined%
\gdef\SetFigFont#1#2#3#4#5{%
  \reset@font\fontsize{#1}{#2pt}%
  \fontfamily{#3}\fontseries{#4}\fontshape{#5}%
  \selectfont}%
\fi\endgroup%
\resizebox{!}{7cm}{
\begin{picture}(0,0)%
\includegraphics{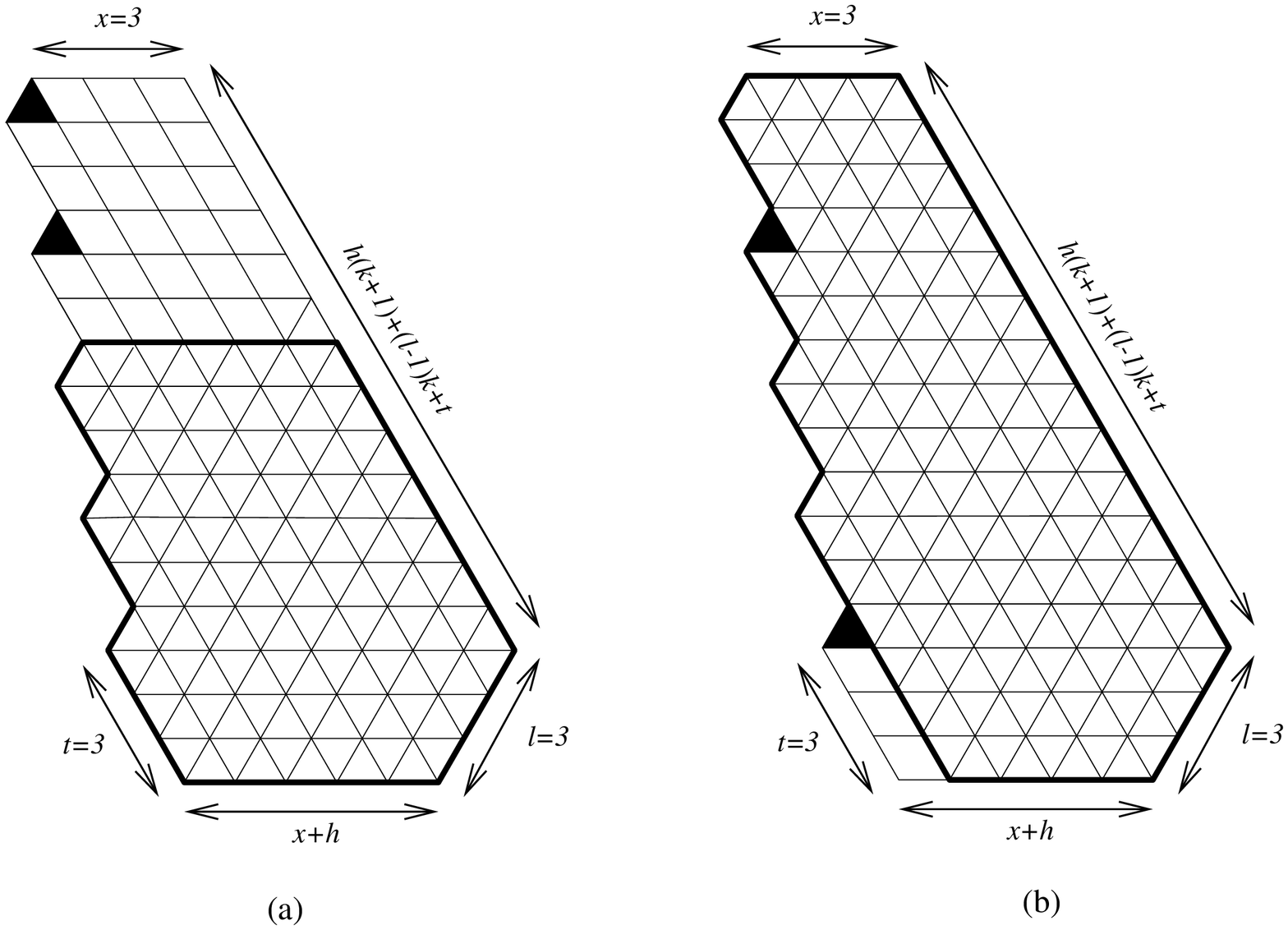}%
\end{picture}%

\begin{picture}(10394,7550)(1623,-7582)
\put(1644,-3264){\makebox(0,0)[lb]{\smash{{\SetFigFont{12}{14.4}{\rmdefault}{\mddefault}{\updefault}{\color[rgb]{0,0,0}$a_3$}%
}}}}
\put(7546,-4299){\makebox(0,0)[lb]{\smash{{\SetFigFont{12}{14.4}{\rmdefault}{\mddefault}{\updefault}{\color[rgb]{0,0,0}$a_1$}%
}}}}
\put(7448,-3214){\makebox(0,0)[lb]{\smash{{\SetFigFont{12}{14.4}{\rmdefault}{\mddefault}{\updefault}{\color[rgb]{0,0,0}$a_2$}%
}}}}
\put(7036,-1076){\makebox(0,0)[lb]{\smash{{\SetFigFont{12}{14.4}{\rmdefault}{\mddefault}{\updefault}{\color[rgb]{0,0,0}$a_3$}%
}}}}
\put(2026,-5342){\makebox(0,0)[lb]{\smash{{\SetFigFont{12}{14.4}{\rmdefault}{\mddefault}{\updefault}{\color[rgb]{0,0,0}$a_1$}%
}}}}
\put(1801,-4299){\makebox(0,0)[lb]{\smash{{\SetFigFont{12}{14.4}{\rmdefault}{\mddefault}{\updefault}{\color[rgb]{0,0,0}$a_2$}%
}}}}
\end{picture}}
\caption{Obtaining a smaller region by removing forced lozenges in the cases (a) $a_l<l+h$ and (b) $a_1>1$.}\label{Fig:Tiltinghalfhex6}
\end{figure}

\begin{figure}\centering
\setlength{\unitlength}{3947sp}%
\begingroup\makeatletter\ifx\SetFigFont\undefined%
\gdef\SetFigFont#1#2#3#4#5{%
  \reset@font\fontsize{#1}{#2pt}%
  \fontfamily{#3}\fontseries{#4}\fontshape{#5}%
  \selectfont}%
\fi\endgroup%
\resizebox{!}{15cm}{
\begin{picture}(0,0)%
\includegraphics{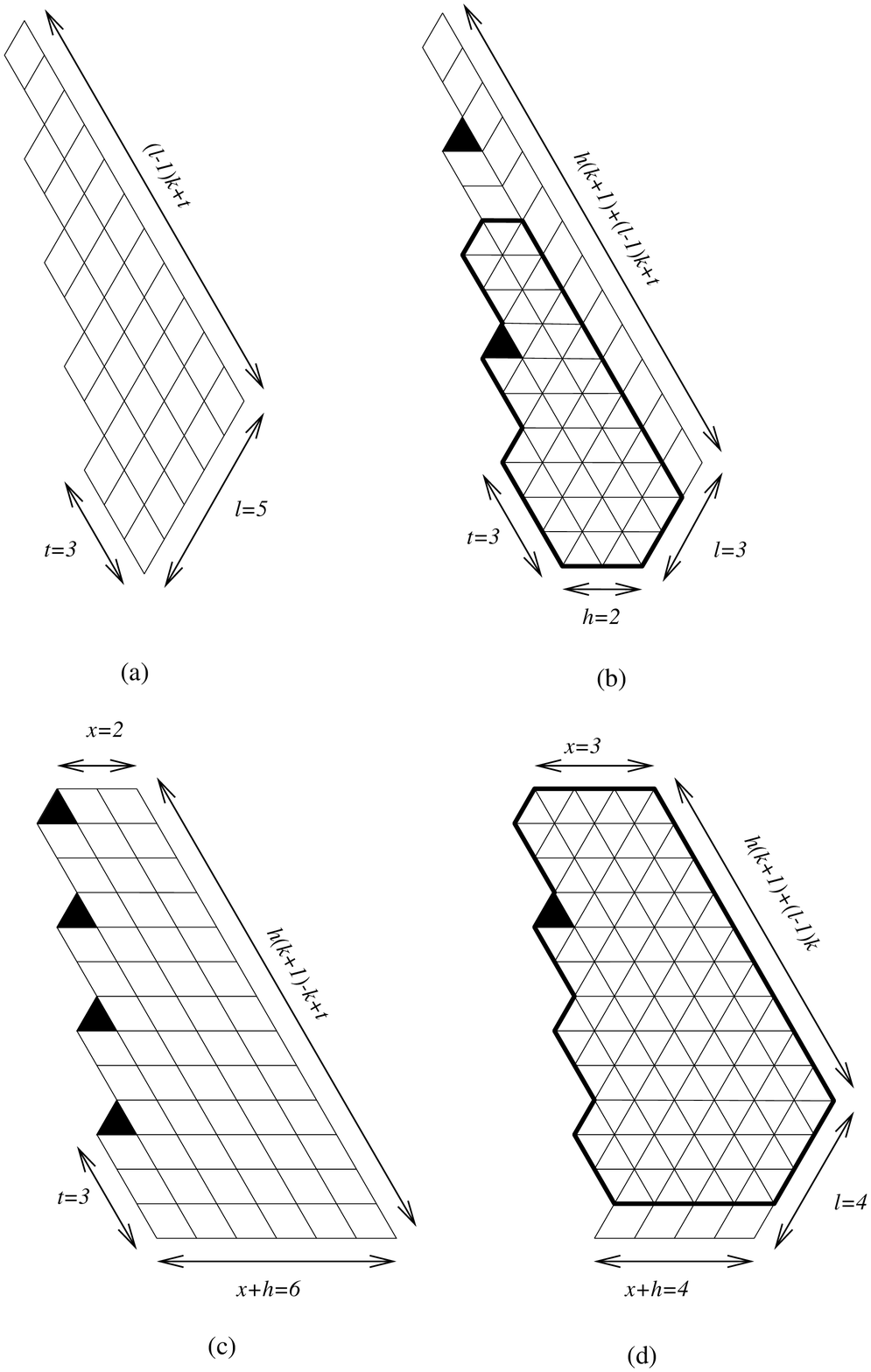}%
\end{picture}%
%
%
\begin{picture}(9329,14051)(814,-14028)
\put(5978,-8511){\makebox(0,0)[lb]{\smash{{\SetFigFont{12}{14.4}{\rmdefault}{\mddefault}{\updefault}{\color[rgb]{0,0,0}$a_4$}%
}}}}
\put(5957,-4737){\makebox(0,0)[lb]{\smash{{\SetFigFont{12}{14.4}{\rmdefault}{\mddefault}{\updefault}{\color[rgb]{0,0,0}$a_1$}%
}}}}
\put(5537,-2615){\makebox(0,0)[lb]{\smash{{\SetFigFont{12}{14.4}{\rmdefault}{\mddefault}{\updefault}{\color[rgb]{0,0,0}$a_2$}%
}}}}
\put(5125,-477){\makebox(0,0)[lb]{\smash{{\SetFigFont{12}{14.4}{\rmdefault}{\mddefault}{\updefault}{\color[rgb]{0,0,0}$a_3$}%
}}}}
\put(1647,-4832){\makebox(0,0)[lb]{\smash{{\SetFigFont{12}{14.4}{\rmdefault}{\mddefault}{\updefault}{\color[rgb]{0,0,0}$a_1$}%
}}}}
\put(1340,-3828){\makebox(0,0)[lb]{\smash{{\SetFigFont{12}{14.4}{\rmdefault}{\mddefault}{\updefault}{\color[rgb]{0,0,0}$a_2$}%
}}}}
\put(1136,-2765){\makebox(0,0)[lb]{\smash{{\SetFigFont{12}{14.4}{\rmdefault}{\mddefault}{\updefault}{\color[rgb]{0,0,0}$a_3$}%
}}}}
\put(931,-1702){\makebox(0,0)[lb]{\smash{{\SetFigFont{12}{14.4}{\rmdefault}{\mddefault}{\updefault}{\color[rgb]{0,0,0}$a_4$}%
}}}}
\put(829,-580){\makebox(0,0)[lb]{\smash{{\SetFigFont{12}{14.4}{\rmdefault}{\mddefault}{\updefault}{\color[rgb]{0,0,0}$a_5$}%
}}}}
\put(6748,-12651){\makebox(0,0)[lb]{\smash{{\SetFigFont{12}{14.4}{\rmdefault}{\mddefault}{\updefault}{\color[rgb]{0,0,0}$a_1$}%
}}}}
\put(6489,-11641){\makebox(0,0)[lb]{\smash{{\SetFigFont{12}{14.4}{\rmdefault}{\mddefault}{\updefault}{\color[rgb]{0,0,0}$a_2$}%
}}}}
\put(6285,-10578){\makebox(0,0)[lb]{\smash{{\SetFigFont{12}{14.4}{\rmdefault}{\mddefault}{\updefault}{\color[rgb]{0,0,0}$a_3$}%
}}}}
\end{picture}}
\caption{Several special cases of the region $H_{x,t,h}(a_1,a_2,\dots,a_l)$.}\label{Fig:Tiltinghalfhex7}
\end{figure}

\begin{figure}\centering
\setlength{\unitlength}{3947sp}%
\begingroup\makeatletter\ifx\SetFigFont\undefined%
\gdef\SetFigFont#1#2#3#4#5{%
  \reset@font\fontsize{#1}{#2pt}%
  \fontfamily{#3}\fontseries{#4}\fontshape{#5}%
  \selectfont}%
\fi\endgroup%

\resizebox{!}{22cm}{
\begin{picture}(0,0)%
\includegraphics{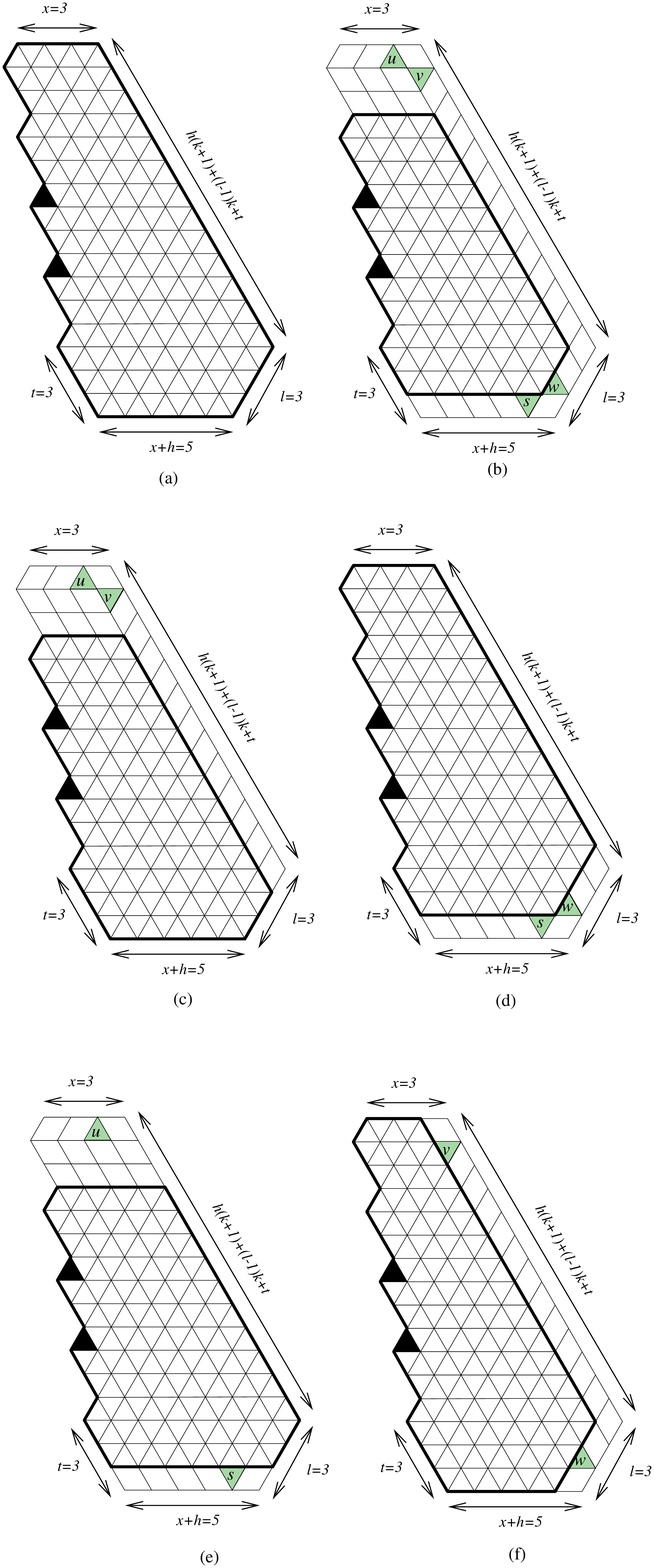}%
\end{picture}%
%
%

\begin{picture}(10375,23895)(1216,-23150)
\put(1996,-4621){\makebox(0,0)[lb]{\smash{{\SetFigFont{14}{16.8}{\familydefault}{\mddefault}{\updefault}{\color[rgb]{0,0,0}$a_1$}%
}}}}
\put(1441,-1449){\makebox(0,0)[lb]{\smash{{\SetFigFont{14}{16.8}{\familydefault}{\mddefault}{\updefault}{\color[rgb]{0,0,0}$a_2$}%
}}}}
\put(1231,-369){\makebox(0,0)[lb]{\smash{{\SetFigFont{14}{16.8}{\familydefault}{\mddefault}{\updefault}{\color[rgb]{0,0,0}$a_3$}%
}}}}
\put(6943,-4611){\makebox(0,0)[lb]{\smash{{\SetFigFont{14}{16.8}{\familydefault}{\mddefault}{\updefault}{\color[rgb]{0,0,0}$a_1$}%
}}}}
\put(2218,-12523){\makebox(0,0)[lb]{\smash{{\SetFigFont{14}{16.8}{\familydefault}{\mddefault}{\updefault}{\color[rgb]{0,0,0}$a_1$}%
}}}}
\put(7153,-12493){\makebox(0,0)[lb]{\smash{{\SetFigFont{14}{16.8}{\familydefault}{\mddefault}{\updefault}{\color[rgb]{0,0,0}$a_1$}%
}}}}
\put(2398,-20916){\makebox(0,0)[lb]{\smash{{\SetFigFont{14}{16.8}{\familydefault}{\mddefault}{\updefault}{\color[rgb]{0,0,0}$a_1$}%
}}}}
\put(7310,-20923){\makebox(0,0)[lb]{\smash{{\SetFigFont{14}{16.8}{\familydefault}{\mddefault}{\updefault}{\color[rgb]{0,0,0}$a_1$}%
}}}}
\put(6350,-1416){\makebox(0,0)[lb]{\smash{{\SetFigFont{14}{16.8}{\familydefault}{\mddefault}{\updefault}{\color[rgb]{0,0,0}$a_2$}%
}}}}
\put(1625,-9343){\makebox(0,0)[lb]{\smash{{\SetFigFont{14}{16.8}{\familydefault}{\mddefault}{\updefault}{\color[rgb]{0,0,0}$a_2$}%
}}}}
\put(6523,-9358){\makebox(0,0)[lb]{\smash{{\SetFigFont{14}{16.8}{\familydefault}{\mddefault}{\updefault}{\color[rgb]{0,0,0}$a_2$}%
}}}}
\put(1835,-17743){\makebox(0,0)[lb]{\smash{{\SetFigFont{14}{16.8}{\familydefault}{\mddefault}{\updefault}{\color[rgb]{0,0,0}$a_2$}%
}}}}
\put(6748,-17728){\makebox(0,0)[lb]{\smash{{\SetFigFont{14}{16.8}{\familydefault}{\mddefault}{\updefault}{\color[rgb]{0,0,0}$a_2$}%
}}}}
\put(6140,-366){\makebox(0,0)[lb]{\smash{{\SetFigFont{14}{16.8}{\familydefault}{\mddefault}{\updefault}{\color[rgb]{0,0,0}$a_3$}%
}}}}
\put(1423,-8271){\makebox(0,0)[lb]{\smash{{\SetFigFont{14}{16.8}{\familydefault}{\mddefault}{\updefault}{\color[rgb]{0,0,0}$a_3$}%
}}}}
\put(6328,-8263){\makebox(0,0)[lb]{\smash{{\SetFigFont{14}{16.8}{\familydefault}{\mddefault}{\updefault}{\color[rgb]{0,0,0}$a_3$}%
}}}}
\put(6552,-16659){\makebox(0,0)[lb]{\smash{{\SetFigFont{14}{16.8}{\familydefault}{\mddefault}{\updefault}{\color[rgb]{0,0,0}$a_3$}%
}}}}
\put(1622,-16659){\makebox(0,0)[lb]{\smash{{\SetFigFont{14}{16.8}{\familydefault}{\mddefault}{\updefault}{\color[rgb]{0,0,0}$a_3$}%
}}}}
\end{picture}}
\caption{Using Kuo condensation to obtain a recurrence for tiling numbers of the $H$-type regions.}\label{Fig:Tiltinghalfhex8}
\end{figure}

We are now ready to prove our main theorem.

\begin{proof}[Proof of Theorem \ref{mainthm}]
Fix some nonnegative integer $k$. We prove identity (\ref{maineq}) by induction on the statistic $p:=x+t+l+(k+2)h$ of the region $H_{x,t,h}(a_1,a_2,\dots,a_l)$. In the rest of this proof, we say that an $H$-type region $A$ is ``smaller" than an $H$-type region $B$ if the $p$-statistic of $A$ is smaller than the $p$-statistic of $B$.

The base case is the case when $l =0$. If $l=0$, identity  (\ref{maineq}) becomes ``$1=1$," as our region has exactly one tiling (see Figure \ref{Fig:Tiltinghalfhex7}(c)), and the expression on the right-hand side of (\ref{maineq}) is precisely $1$. 

For induction step, we assume that $l>0$ and that identity  (\ref{maineq}) holds for any $H$-type region with $p$-statistic strictly smaller than $x+t+l+(k+2)h$. 

We can assume that $a_1=1$ and $a_{l}=h+l$. Indeed, if $a_l<h+l$, then we can remove forced lozenges in the $(k+1)(l+h-a_{l})$ top rows of the region $H_{x,t,h}(a_1,a_2,\dots,a_l)$ to get a smaller $H$-type region, namely $H_{x+l+h-a_{l}, t, a_{l}-l}(a_1,a_2,\dots,a_l)$   (see the region restricted by the bold contour in Figure \ref{Fig:Tiltinghalfhex6}(a)).  If $a_1>1$, we can also get a smaller $H$-type region as in Figure \ref{Fig:Tiltinghalfhex6}(b). The new region is precisely $H_{x, t+(k+1)(a_1-1), h+1-a_1}(1,a_2-a_1+1,a_3-a_1+1,\dots,a_l-a_1+1)$. Then  (\ref{maineq}) holds by the induction hypothesis, when $a_l<l+h$ or $a_1>1$.

Next, if $x=h=0$, then our identity  (\ref{maineq}) becomes ``$1=1$." Our region has only one tiling (see Figure \ref{Fig:Tiltinghalfhex7}(a)), and our tiling formula is simply to $1$.  If $x=0$ and $h>0$, we obtain a smaller region by removing forced lozenges as in Figure \ref{Fig:Tiltinghalfhex7}(b). If $t=0$ (we are assuming that $a_1=1$), then we can also remove forced lozenges along the base to get a smaller region (see Figure \ref{Fig:Tiltinghalfhex7}(d)). It means that (\ref{maineq}) holds in the situations when at least one of $x$ and $t$ is equal to $0$.

\medskip

In the rest of the proof, we assume that  $x,l,t>0,$  and that $a_1=1,a_{l}=h+l$. 

We now apply Kuo's Lemma \ref{Kuolem} to the dual graph $G$ of the region $H=H_{x,t,h}(a_1,a_2,\dots,a_l)$ with the four vertices $u,v,w,s$ chosen as in Figure \ref{Fig:Tiltinghalfhex8}(b) (the four vertices are  indicated by the corresponding unit triangles of the same label in $H$). More precisely,  the $u$- and $v$-triangles are the shaded unit triangles at the upper-right corner of the region, and the $w$- and $s$-triangles are the shaded unit triangles at lower-right corner. We get the recurrence
\begin{align}\label{Grecurrence}
\M(G)\M(G-\{u,v,w,s\})=\M(G-\{u,v\})\M(G-\{w,s\})+\M(G-\{u,s\})\M(G-\{v,w\}).
\end{align}

Next, we will convert recurrence (\ref{Grecurrence}) into a recurrence for the tiling numbers of $H$-type regions. To do so, we will write each matching number in (\ref{Grecurrence}) as the tiling number of some $H$-type region. The task for the first matching number is obvious. By definition, we have:
\begin{equation}\label{eq1}
\M(G)=\M(H_{x,t,h}(a_1,a_2,\dots,a_l)).
\end{equation}

Let us consider the second matching number, i.e., $\M(G-\{u,v,w,s\})$. We consider the region  corresponding to the graph $G-\{u,v,w,s\}$ as shown in Figure \ref{Fig:Tiltinghalfhex8}(b). The removal of the $u$-, $v$-, $w$-, and $s$-triangles yields  some forced lozenges on the top, the northeast, the southeast, and the base of the region. By removing these forced lozenges, we get a new $H$-type region, namely $H_{x,t-1,h}(a_1,\dots,a_{l-1})$ (see the region restricted by the bold contour). As the removal of forced lozenges does not  change the tiling number, we get
\begin{equation}\label{eq2}
\M(G-\{u,v,w,s\})=\M(H_{x,t-1,h}(a_1,\dots,a_{l-1})).
\end{equation}
By considering forced lozenges, as in Figures \ref{Fig:Tiltinghalfhex8}(c)--(f), we get respectively the following identities:
\begin{equation}\label{eq3}
\M(G-\{u,v\})=\M(H_{x,t,h}(a_1,\dots,a_{l-1})),
\end{equation}
\begin{equation}\label{eq4}
\M(G-\{w,s\})=\M(H_{x,t-1,h}(a_1,\dots,a_{l})),
\end{equation}
\begin{equation}\label{eq5}
\M(G-\{u,s\})=\M(H_{x+1,t-1,h}(a_1,\dots,a_{l-1})),
\end{equation}
\begin{equation}\label{eq6}
\M(G-\{v,w\})=\M(H_{x-1,t,h}(a_1,\dots,a_{l})).
\end{equation}

By (\ref{eq1})--(\ref{eq6}), we can convert recurrence (\ref{Grecurrence}) into the following recurrence:
\begin{align}
\M(H_{x,t,h}(a_1,\dots,a_{l}))\M(H_{x,t-1,h}(a_1,\dots,a_{l-1}))&=\M(H_{x,t,h}(a_1,\dots,a_{l-1}))\M(H_{x,t-1,h}(a_1,\dots,a_{l}))\notag\\
&+\M(H_{x+1,t-1,h}(a_1,\dots,a_{l-1}))\M(H_{x-1,t,h}(a_1,\dots,a_{l})).
\end{align}

Our remaining  job is to show that the tiling formula on the right-hand side of (\ref{maineq}) also satisfies this recurrence. Then the theorem follows from the induction principle. In particular, if we denote by $f_{x,t,h}(a_1,\dots,a_{l-1})$ this tiling formula, we need to verify that 
\begin{align}\label{Feq1}
f_{x,t,h}(a_1,\dots,a_{l})f_{x,t-1,h}(a_1,\dots,a_{l-1})&=f_{x,t,h}(a_1,\dots,a_{l-1})f_{x,t-1,h}(a_1,\dots,a_{l})\notag\\
&+f_{x+1,t-1,h}(a_1,\dots,a_{l-1})f_{x-1,t,h}(a_1,\dots,a_{l}).
\end{align}
Equivalently
\begin{align}\label{Feq2}
\frac{f_{x,t,h}(a_1,\dots,a_{l-1})}{f_{x,t,h}(a_1,\dots,a_{l})}\cdot \frac{f_{x,t-1,h}(a_1,\dots,a_{l})}{f_{x,t-1,h}(a_1,\dots,a_{l-1})}+\frac{f_{x+1,t-1,h}(a_1,\dots,a_{l-1})}{f_{x,t-1,h}(a_1,\dots,a_{l-1})}\frac{f_{x-1,t,h}(a_1,\dots,a_{l})}{f_{x,t,h}(a_1,\dots,a_{l})}=1.
\end{align}
Consider the first term on the left-hand side of (\ref{Feq2}). All the $\Phi$-factors in this term cancel out, except for the ones involving $x$:
\begin{align}\label{Feq3}
\frac{f_{x,t,h}(a_1,\dots,a_{l-1})}{f_{x,t,h}(a_1,\dots,a_{l})}\cdot \frac{f_{x,t-1,h}(a_1,\dots,a_{l})}{f_{x,t-1,h}(a_1,\dots,a_{l-1})}=\frac{\Phi_{k}((a_j)_{j=1}^{l-1}; t,x,h)\Phi_{k}((a_j)_{j=1}^{l}; t-1,x,h)}{\Phi_{k}((a_j)_{j=1}^{l-1}; t-1,x,h)\Phi_{k}((a_j)_{j=1}^{l}; t,x,h)}.
\end{align}
 By definition of the $\Phi$-function, one could simplify the right-hand side of (\ref{Feq3}) to
\begin{align}
\frac{\Big((k+1)(l+h-1)+t\Big)\Big((k+1)(x+h+l-1)+t)\Big)}{\Big((k+1)(x+l+h-1)+t\Big)\Big(x+t+k(l-1)+(k+1)h\Big)}.
\end{align}
It means that the first term on left-hand side of  (\ref{Feq2}) reduces to
\begin{align}\label{Feq4}
\frac{f_{x,t,h}(a_1,\dots,a_{l-1})}{f_{x,t,h}(a_1,\dots,a_{l})}\cdot \frac{f_{x,t-1,h}(a_1,\dots,a_{l})}{f_{x,t-1,h}(a_1,\dots,a_{l-1})}=\frac{\Big((k+1)(l+h-1)+t\Big)\Big((k+1)(x+h+l-1)+t)\Big)}{\Big((k+1)(x+l+h-1)+t\Big)\Big(x+t+k(l-1)+(k+1)h\Big)}.
\end{align}

Similarly, one could simply the the second term on left-hand side of (\ref{Feq2}) as
\begin{align}\label{Feq5}
\frac{f_{x+1,t-1,h}(a_1,\dots,a_{l-1})}{f_{x,t-1,h}(a_1,\dots,a_{l-1})}\frac{f_{x-1,t,h}(a_1,\dots,a_{l})}{f_{x,t,h}(a_1,\dots,a_{l})}&=\frac{\Phi_{k}((a_j)_{j=1}^{l-1}; t-1,x+1,h)\Phi_{k}((a_j)_{j=1}^{l}; t,x-1,h)}{\Phi_{k}((a_j)_{j=1}^{l-1}; t-1,x,h)\Phi_{k}((a_j)_{j=1}^{l}; t,x,h)}\\
&=\frac{x\Big((k+1)x+(k+1)h+t\Big)}{\Big((k+1)(x+l+h-1)+t\Big)\Big(x+t+k(l-1)+(k+1)h\Big)}.
\end{align}
The identity  (\ref{Feq2}) now becomes
\begin{align}
&\frac{\Big((k+1)(l+h-1)+t\Big)\Big((k+1)(x+h+l-1)+t)\Big)}{\Big((k+1)(x+l+h-1)+t\Big)\Big(x+t+k(l-1)+(k+1)h\Big)}\notag\\
&\quad\quad\quad\quad\quad\quad\quad\quad\quad\quad\quad+\frac{x\Big((k+1)x+(k+1)h+t\Big)}{\Big((k+1)(x+l+h-1)+t\Big)\Big(x+t+k(l-1)+(k+1)h\Big)}=1,
\end{align}
which is a true identity. This finishes the proof of our theorem. 
\end{proof}

\section{Several open questions}\label{Sec:Q}
We conclude this paper by a couple of open questions.

\medskip

As there exist elegant $q$-analogs of MacMahon's tiling formula (also by MacMahon \cite{Mac}), Cohn--Larsen--Propp's formula (in the language of column-strict plane partitions, see, e.g., \cite[pp. 374--375]{Stanley}), and Proctor formula (see \cite{LR20}), one should expect for a $q$-analog of our main theorem, at least for the case with no dents.
\
\\

\textbf{Open Problem 1.} Find a $q$-analog of Theorem \ref{maineq}.
\
\\

It is well-known that the weighted enumerations of tilings of a quasi-regular hexagon and a semi-hexagon can be written as a Schur polynomial. The weighted enumeration of tilings of the halved hexagon can also be written in terms of the symplectic characters (see, e.g.,  \cite{AF}).
\
\\

\textbf{Open Question 2.}  Is there any symmetric function that counts the weighted tilings of the $k$-halved hexagons?

\bibliographystyle{plain}
\bibliography{Tiltinghex}

\end{document}